\theoremstyle{theorem}
\newtheorem{theorem}{Theorem}
\newtheorem{lemma}[theorem]{Lemma}
\newtheorem{corollary}[theorem]{Corollary}
\DeclarePairedDelimiterX{\bra}[2]{\langle}{\rangle}{#1, #2}
\DeclarePairedDelimiterX{\Bra}[1]{\langle}{\rangle}{#1}
\newcommand{\norm}[1]{{\Vert #1 \Vert }}
\DeclareMathOperator{\dist}{\mathrm{dist}}
\newcommand{\dv}{\,\mathrm{d}V}
\newcommand{\da}{\,\mathrm{d}A}
\begin{document}

\title{Short-time heat content asymptotics via the wave and eikonal equations}
\author{Nathanael Schilling \thanks{ Zentrum Mathematik, Technische Universität München, Boltzmannstr. 3, 85748 Garching bei München, schillna@ma.tum.de}}
\maketitle
\begin{abstract}
In this short paper, we derive an alternative proof for some known \cite{Vandenberg2015} short-time asymptotics of the heat content in a compact full-dimensional submanifolds $S$ with smooth boundary. This includes formulae like 
\begin{equation*}
\int_{S} \exp(t\Delta)\left( f \mathds 1_S\right) \dv = \int_S f  \dv -  \sqrt{\frac{t}{\pi}} \int_{\partial S} f \da + o(\sqrt t),\quad t \rightarrow 0^+\,,
\end{equation*}
and explicit expressions for similar expansions involving other powers of $\sqrt t$.
By the same method, we also obtain short-time asymptotics of $\int_S \exp(t^m\Delta^m) \left(f \mathds 1_S\right)\dv$,  $m \in \mathbb N$, and more generally for one-parameter families of operators $t \mapsto k(\sqrt{-t\Delta})$ defined by an even Schwartz function $k$.
\end{abstract}
\section{Introduction}
Let $(M,g)$ be a complete, boundaryless\footnote{we assume that $M$ has no boundary for the sake of simplicity, the method presented here can be adapted to more general manifolds with boundary provided that $S$ is compactly contained in the interior of $M$.
If this is not the case, such as in the classical heat content setting as in \cite{Vandenberg1994},
it should be possible to obtain similar results by modifying the geometrical optics construction used.
}, oriented Riemannian manifold with Laplace--Bel\-tra\-mi operator $\Delta$,
and volume $\dv$. On a codimension-$1$ submanifold of $M$, we write $\da$ for the induced surface (hyper)-area form.
The \emph{heat semigroup} $T_t \coloneqq \exp(t\Delta)$ acting on $L^2(M,\dv)$ is well-defined ($\Delta$ is essentially self-adjoint on $C^\infty_c(M)$ \cite{Chernoff1973}) and its behaviour as $t \rightarrow 0^+$ has been extensively investigated in the literature.
Specifically, for a set $S \subset M$, the \emph{heat content} of the form $\Omega_{S,f}(t) \coloneqq \int_S T_t(f \mathds 1_S)\dv$, $f \in C^\infty(M)$,  has recently
received much attention; see, for instance, \cite{Miranda2007,Vandenberg2015,Vandenberg2018} and the references therein.

Let us briefly recall some known results.
On $\mathbb R^n$, sets $S$ of \emph{finite perimeter} $P(S)$ are characterized by \cite[Thm.~3.3 ]{Miranda2007}
\begin{equation}\label{eq:res1}
\lim\limits_{t \rightarrow 0^+ }\sqrt{\frac{\pi}{t}}\Big(\Omega_{S,\mathds 1_M}(0) - \Omega_{S,\mathds 1_M}(t)\Big) = P(S)\,.
\end{equation}
Extensions of this idea to abstract metric spaces are given in \cite{Marola2016}.
In the setting of compact manifolds $M$ (or $M = \mathbb R^n$) and $S$ a full-dimensional submanifold with smooth boundary $\partial S$, the authors of \cite{Vandenberg2015} show that

\begin{equation}\label{eq:res2}
\Omega_{S,f}(t) = \sum_{j=0}^\infty \beta_j t^\frac{j}{2},\quad t\rightarrow 0^+,
\end{equation}
where the coefficients $\beta_j$ depend on $S$, $f$ and the geometry of $M$. The setting of \cite{Vandenberg2015} is more general, amongst other things it includes $f$ which have singularities. Some of the cofficients obtained in \cite[corollary 1.7]{Vandenberg2015} are
\begin{equation*}
\beta_0 = \int_S f \dv\,,\qquad \beta_1 = -\frac{1}{\sqrt \pi}\int_{\partial S} f \da
\,,\qquad\beta_2 = \frac{1}{2}\int_{S} \Delta  f \dv\,.
\end{equation*}
Extensions to some non-compact manifolds $M$ and certain non-compact $S$ are in \cite{Vandenberg2018}.

Both \cref{eq:res1,eq:res2} are proven with significant technical effort, yielding strong results. For example,  
in \cite{Miranda2007}, explicit knowledge of the fundamental solution of the heat equation is used
to obtain \cref{eq:res1} for $C^{1,1}$-smooth $\partial S$, after which geometric measure theory is used. Similarly, 
\cite{Vandenberg2015} requires pseudo-differential calculus and invariance theory.

Our aim is to show that slightly weaker
results can be obtained by considerably lower technical effort.
In contrast to \cite{Miranda2007}, we treat only compact $S$ with smooth boundary,
and do not allow $f$ to have singularities like \cite{Vandenberg2015} does.
On the other hand, we put no further restrictions than completeness on $M$.
The proof presented here is simple, comparatively short, and provides an alternative differential geometric/functional analytic point of view to questions regarding heat content.
Moreover, this approach is readily extended to some other PDEs including the semi-group generated by $\Delta^m$. 
Observe that $T(t) = k(\sqrt{-t\Delta})$ with $k(x) = \exp(-x^2)$. We allow $k$ to be an arbitrary even Schwarz function, with $\Omega_{S,f}(t) = \int_{S} k(\sqrt{-t\Delta})(f \mathds 1_S)\dv$ and will prove:
\begin{theorem}\label{thm:thm1}
Let $M$ be a complete Riemannian manifold with Laplace-Beltrami operator $\Delta$, Riemannian volume $\dv$ and induced (hyper) area form $\da$.
Let $S \subset M$ be a compact full-dimensional submanifold with smooth boundary. For $f \in C^\infty(M)$ and $N \in \mathbb N$,
\begin{equation*}
\Omega_{S,f}(t) = \sum_{j=0}^N \beta_j t^{\frac{j}{2}} + o(t^\frac{N}{2})\,,\quad t \rightarrow 0^+\,,
\end{equation*}
for constants $(\beta_j)_{j=0}^N$ described further in the next theorem.
\end{theorem}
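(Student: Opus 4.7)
The strategy is to pass from $k(\sqrt{-t\Delta})$ to the wave propagator $\cos(s\sqrt{-\Delta})$ by Fourier inversion, exploit finite propagation speed to localise everything near $\partial S$, and construct an asymptotic expansion of the localised wave propagator by geometrical optics, with the phase satisfying the eikonal equation.

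Since $k$ is even and Schwartz, the spectral theorem gives
\[
k(\sqrt{-t\Delta}) \;=\; \frac{1}{\sqrt{2\pi}}\int_{\mathbb R}\hat k(\xi)\,\cos\!\bigl(\xi\sqrt t\,\sqrt{-\Delta}\bigr)\,d\xi,
\]
so that $\Omega_{S,f}(t) = \tfrac{1}{\sqrt{2\pi}}\int \hat k(\xi)\,G(\xi\sqrt t)\,d\xi$ with $G(s) := \int_S\cos(s\sqrt{-\Delta})(f\mathds 1_S)\dv$. Since $G$ is even in $s$, an expansion $G(s)=\sum_{j=0}^N\gamma_j|s|^j+o(|s|^N)$ would, after termwise integration against the Schwartz density $\hat k$, immediately yield the theorem, with $\beta_j = \tfrac{\gamma_j}{\sqrt{2\pi}}\int\hat k(\xi)|\xi|^j\,d\xi$; rapid decay of $\hat k$ controls the remainder.

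To expand $G$, the key input is finite propagation speed of the wave equation: $u(s,x):=\cos(s\sqrt{-\Delta})(f\mathds 1_S)(x)$ coincides with $f(x)\mathds 1_S(x)$ as soon as $\dist(x,\partial S)>|s|$, so only a tubular neighbourhood $T_{|s|}$ of $\partial S$ contributes to $G(s)-\int_S f\dv$. For $|s|$ smaller than the injectivity radius of the normal exponential map, Fermi coordinates $(p,r)\in\partial S\times(-\varepsilon,\varepsilon)$ cover $T_{|s|}$, and the signed distance $r$ satisfies the eikonal equation $|\nabla r|^2=1$. Inside the tube I would construct a WKB parametrix
\[
u_N(s,p,r) \;=\; \sum_{j=0}^N a_j(p,r)\,E_j(s,r),
\]
where $E_0$ encodes the d'Alembert-type jump of $\mathds 1_{\{r<0\}}$ across the characteristic front $|r|=|s|$, the later $E_j$ are explicit one-dimensional profiles produced by successive antiderivatives in $r$, and the amplitudes $a_j$ are solved inductively from transport equations obtained by substituting the ansatz into $(\partial_s^2-\Delta)u_N=0$ and matching the Cauchy data $(f\mathds 1_S,0)$ at $s=0$; a standard energy estimate bounds $u-u_N$ on the tube.

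Integrating $u_N - f\mathds 1_S$ over $S\cap T_{|s|}$ in Fermi coordinates, using $\dv = J(p,r)\da(p)\,dr$ with $J$ a smooth Jacobian encoding the second fundamental form and higher geometric invariants of $\partial S$, yields a finite sum of monomials in $|s|$, each of the generic shape $\bigl(\int_{\partial S}\alpha\da\bigr)\cdot\int_{-|s|}^0 r^k E_j(s,r)\,dr$, which produces the desired expansion of $G$ and hence, via the Fourier step, of $\Omega_{S,f}$. The principal technical hurdle I anticipate is the clean construction of the parametrix for initial data with a jump along $\partial S$: the profiles $E_j$ and the transport equations for the $a_j$ must be chosen so that each recursive step is well-posed along the geodesic flow of $\nabla r$, and the energy bound on $u-u_N$ must survive integration over $T_{|s|}$ so as to give a pointwise $o(|s|^N)$ conclusion for $G$. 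The remaining ingredients—termwise Fourier integration against $\hat k$, the change of variables in the tube, and bookkeeping of the bulk term $\int_S f\dv$—are routine.
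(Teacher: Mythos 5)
Your proposal is correct and follows essentially the same route as the paper: transmutation of $k(\sqrt{-t\Delta})$ into the wave propagator, finite propagation speed to localise near $\partial S$, and a progressing-wave/WKB parametrix with the signed distance as eikonal phase and iterated-antiderivative profiles, followed by termwise integration against $\hat k$. The only cosmetic difference is that the paper organises the tube integration via the Reynolds transport and divergence theorems rather than explicit Fermi coordinates and the Jacobian $J(p,r)$.
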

With the $j$-th derivative $k^{(j)}$  (for $j \in \mathbb N_0$), let $r_{j} \coloneqq (-1)^{j/2} k^{(j)}(0)$ for $j$ even and $r_{j} \coloneqq (-1)^{(j-1)/2} \int_0^\infty \frac{2k^{j}(s)}{-\pi s}\, \mathrm d s$ for $j$ odd. Let $\varphi$ locally be the signed distance function (see also \cite[section 3.2.2]{Petersen2016}) to $\partial S$ with $S = \varphi^{-1}([0,\infty))$, and denote by $\nabla$ and $\cdot$ the gradient and (metric) inner product respectively. The vector field $\nu \coloneqq -\nabla \varphi$ is outer unit normal at $\partial S$.
\begin{theorem}\label{thm:thm2}
The coefficients of \cref{thm:thm1} satisfy $\beta_0 = r_0 \int_S f \dv$ and $\beta_1 = -\frac12 r_1 \int_{\partial S} f \da$. For even $j \in \mathbb N_{\geq 2}$, 
\begin{equation*}
\beta_{j} = \frac{r_j}{j!}
\int_S\frac12 {\Delta^{j/2} f}\dv
\end{equation*}
Moreover, given the Lie-derivative $\mathcal L_\nu$ with respect to $\nu$,
\begin{align*}
\beta_3 = \frac{r_3}{2\cdot3!} \int_{\partial S}\mathcal L_\nu (-\mathcal L_\nu + \frac12\Delta\varphi)  f - \frac12 \Delta f + \frac12 (-\mathcal L_\nu + \frac12\Delta\varphi)^2 f\da\,,
\end{align*}
similar expression can be found also for larger odd values of $j$ (see \cref{sec:higher_coeffs}).
\end{theorem}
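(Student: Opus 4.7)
The plan is to convert $k(\sqrt{-t\Delta})$ into an average of wave propagators and analyse the latter by a WKB construction near $\partial S$. Since $k$ is even and Schwartz, functional calculus gives
\[
k(\sqrt{-t\Delta})\;=\;\int_{-\infty}^{\infty}\hat k(s)\,\cos\!\bigl(s\sqrt t\,\sqrt{-\Delta}\bigr)\,ds,
\]
so integrating over $S$ yields
\[
\Omega_{S,f}(t)\;=\;\int_{-\infty}^{\infty}\hat k(s)\,v(s\sqrt t)\,ds,\qquad v(\tau)\;:=\;\int_S\cos(\tau\sqrt{-\Delta})(f\mathds 1_S)\dv.
\]
Since $\cos$ is even, so is $v$, and it suffices to establish an expansion $v(\tau)=\sum_{j=0}^{N}\gamma_j|\tau|^{j}+o(|\tau|^N)$; plugging back in and rescaling yields $\beta_j=\gamma_j\int\hat k(s)|s|^j\,ds$. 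A direct computation (using $\int\hat k(s)s^{2k}\,ds=(-1)^k k^{(2k)}(0)$ and a Hadamard regularisation combined with integration by parts for odd $j$) then identifies the latter integral with $r_j$, reducing the theorem to computing the $\gamma_j$.

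To compute $\gamma_j$ I set up a geometric-optics parametrix for the wave equation $u_{\tau\tau}=\Delta u$ with data $u(0)=f\mathds 1_S$, $u_\tau(0)=0$ in a tubular neighbourhood of $\partial S$. Since $\varphi$ satisfies the eikonal equation $|\nabla\varphi|^2=1$, I seek an ansatz
\[
u(\tau,x)\;\sim\;\tfrac12\sum_{\ell\ge 0}\bigl[A_\ell(x)w_\ell(\varphi(x)-\tau)+A_\ell(x)w_\ell(\varphi(x)+\tau)\bigr],
\]
with $w_0=H$ the Heaviside function and $w_{\ell+1}'=w_\ell$. Substituting into the wave equation and using the eikonal identity causes the most singular terms to cancel, leaving first-order transport equations along $\nu=-\nabla\varphi$ of the form
\[
\bigl(\mathcal L_\nu-\tfrac12\Delta\varphi\bigr)A_{\ell+1}\;=\;\tfrac12\Delta A_\ell,\qquad A_0|_{\partial S}=f,
\]
which are integrated along normal geodesics out of $\partial S$. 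Away from $\partial S$, finite-speed propagation lets me use the formal Taylor expansion $\cos(\tau\sqrt{-\Delta})\tilde f=\sum_k\tau^{2k}\Delta^k\tilde f/(2k)!$ applied to any smooth extension $\tilde f$ of $f$.

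To extract $\gamma_j$ I integrate the ansatz in Fermi coordinates $(p,r)\in\partial S\times[0,\varepsilon)$ with $\varphi(p,r)=r$. For small $\tau$, $H(\varphi+\tau)\equiv 1$ inside $S$ while $H(\varphi-\tau)=H(r-\tau)$ jumps at $r=\tau$, so the $w_0$-term produces a boundary-strip contribution that, once expanded in $\tau$, gives $\gamma_1=-\tfrac12\int_{\partial S}f\da$ and hence $\beta_1=-\tfrac12 r_1\int_{\partial S}f\da$. Matching the WKB ansatz with the interior Taylor expansion across the strip's inner edge introduces the extra prefactor $\tfrac12$ in the even coefficients $\gamma_{2k}=\frac{1}{2(2k)!}\int_S\Delta^k f\dv$ for $k\ge 1$, reflecting the two-sided propagation; multiplication by $r_{2k}$ recovers $\beta_{2k}$. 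The third-order coefficient $\gamma_3$ requires solving the transport equation once: the integrating factor for $(\mathcal L_\nu-\tfrac12\Delta\varphi)$ introduces the combination $(-\mathcal L_\nu+\tfrac12\Delta\varphi)f$ evaluated at $\partial S$, and combining with the source term $\tfrac12\Delta A_0=\tfrac12\Delta f$ and the ramp-function integral $\int_0^\tau w_1(r-\tau)\,dr$ gives exactly the stated expression for $\beta_3$.

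The principal technical obstacle is the remainder estimate: truncating the WKB sum at order $L$ leaves a residual wave equation with smooth source of size $O(\tau^L)$, and one needs $\int\hat k(s)R(s\sqrt t)\,ds=o(t^{N/2})$. A standard energy estimate for the wave equation on a fixed time interval (available since $S$ is compact and $M$ is complete), combined with the rapid decay of $\hat k$, furnishes this bound. A related point is the partition-of-unity matching between the interior Taylor expansion and the boundary-strip WKB expression; differences are supported where both expansions are valid and agree to arbitrarily high order, so they produce only errors absorbed in the $o(t^{N/2})$ tail.
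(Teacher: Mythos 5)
Your overall strategy is the same as the paper's: transmute $k(\sqrt{-t\Delta})$ into a superposition of wave propagators $\cos(s\sqrt t\,\sqrt{-\Delta})$, approximate the wave evolution of $f\mathds 1_S$ by a progressing-wave (geometrical-optics) expansion adapted to the signed distance function, reduce the computation to a boundary-layer integral, and convert the resulting expansion in the wave time $\tau$ into an expansion in $\sqrt t$ via the moments $\int_0^\infty \hat k(s)s^j\,\mathrm ds$ (the paper's \cref{lemma:heatkernellemma}). The leading-order bookkeeping ($\gamma_0,\gamma_1$, hence $\beta_0,\beta_1$) is correct, and your moment identification with $r_j$ agrees with the paper.

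The genuine gap is in the parametrix and its remainder. Your ansatz uses time-independent amplitudes $A_\ell(x)$ with only $A_0|_{\partial S}=f$ prescribed; at $\tau=0$ it equals $A_0\mathds 1_S+\sum_{\ell\ge 1}A_\ell\varphi^\ell/\ell!\,\mathds 1_S$, which agrees with the true initial datum $f\mathds 1_S$ only \emph{on} $\partial S$, not ``to arbitrarily high order'' as your matching argument asserts. The mismatch is a Lipschitz function vanishing to first order at $\partial S$; its wave evolution carries its own conormal singularity along the characteristics emanating from $\partial S$ and contributes to the boundary-strip integral already at order $\tau^2$, i.e.\ it pollutes every $\gamma_j$ with $j\ge 2$. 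An energy estimate cannot absorb this, because the error enters through the initial data, not through a small source. This is precisely what the paper's construction is engineered to avoid: the amplitudes $a_i^\pm(t,x)$ are time-dependent, their initial values $a_i^\pm(0,\cdot)=-\frac12(\partial_t a_{i-1}^+(0,\cdot)+\partial_t a_{i-1}^-(0,\cdot))$ are chosen to cancel the mismatch order by order, and \cref{lemma:ailemma} (the vanishing of $\mathbf D_i$ and its even time derivatives at $t=0$) is what makes the higher correctors drop out of $h^{(j)}(0)$. Without an analogue of that step, your formulas for $\gamma_{2k}$ ($k\ge1$) and especially for $\gamma_3$ are asserted rather than derived — the stated $\beta_3$ requires tracking $\partial_t a_0^+$ and the vanishing of $a_1^+,a_2^+$ at $t=0$, none of which your sketch establishes. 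To repair the argument you would either have to adopt the paper's time-dependent amplitudes with the prescribed initial values, or choose the boundary values $A_\ell|_{\partial S}$ for $\ell\ge1$ to effect the Taylor matching of the initial data and then redo the $\gamma_2,\gamma_3$ computations with those (nonzero) correctors included.
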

The properties of the signed distance function $\varphi$ may be used to express terms appearing in \Cref{thm:thm2} using other quantities.
For example, its Hessian $\nabla^2 \varphi$ is the second fundamental form on the tangent space of $\partial S$ \cite[ch.~3]{Gray2004},
and thus $\frac12\Delta\varphi$ is the mean curvature.

Our approach to prove \cref{thm:thm1,thm:thm2} is to combine 3 well-known facts:
\begin{enumerate}[(A)]
\item The short-time behaviour of the heat flow is related to the short-time behaviour of the wave equation (cf.~\cite{Cheeger1982}).
\item The short-time behaviour of the wave equation with discontinous initial data is related to the short-time behaviour of the eikonal equation (cf.~`geometrical optics' and the progressing wave expansion \cite{Taylor2011}).
\item The short-time behaviour of the wave and eikonal equations with initial data $f \mathds 1_S$ is directly related to the geometry of $M$ near $\partial S$.
\end{enumerate}
Though points (A)-(C) are well known in the literature, they have (to the best of our knowledge) not been applied
to the study of heat content so far.

A significant portion of
(C) will rest on an application of the Reynolds transport theorem. Here, 
denote by $\Phi^s$ the time-$s$ flow of the vector field $\nu = - \nabla \varphi$.
For small $s$, the (half) tubular neighborhood
\begin{equation}\label{eq:nghb}
S^{-s} \coloneqq \{x \in M \setminus S : \mathrm{dist}(x,\partial S) \leq s\}
\end{equation}
satisfies $S \cup S^{-s} = \Phi^s(S)$.
For $a \in C^\infty((-\varepsilon, \varepsilon) \times M)$, by \cite[Ch.~V, Prop.~5.2]{Lang1995}, 
\begin{align}\nonumber
\frac{\mathrm d}{\mathrm ds}\left.\int_{S^{-s}} a(s,\cdot)\dv\right\vert_{s=0}
&=\frac{\mathrm d}{\mathrm ds}\left(\left.\int_{S^{-s} \cup S} a(s,\cdot)\dv- \int_{S} a(s,\cdot)\dv\right)\right\vert_{s=0}  \\
&=\int_{S} \mathcal L_{\tilde \nu} [ a(0,\cdot) \dv]= \int_{\partial S} a(0,\cdot)\da\,.\label{eq:mainformula}
\end{align}
The last equation is a consequence of Cartan's magic formula and Stokes' theorem,
where we use that $\dv(\nu, \cdot) = \da(\cdot)$ on $\partial S$.

\section{Proof for $\beta_0,\beta_1$}
By Fourier theory (for non-Gaussian $k$, the formulae must be adapted),
\begin{equation*}\label{eq:ft}
k(t) = \exp(- t^2) = \int_0^\infty \hat k(s) \cos(ts) \,\mathrm{d}s \quad
\mathrm{with}\quad \hat k(s) \coloneqq \frac{1}{\sqrt{\pi}}\exp\left(\frac{-s^2}{4 }\right)\,.
\end{equation*}
On the operator level, this yields the well-known formula \cite[section~6.2]{Taylor2011}
\begin{equation}\label{eq:transmutation}
T_t = \exp(t\Delta) = \int_0^\infty \hat k(s) \cos( s \sqrt{-t\Delta}) \,\mathrm ds\,.
\end{equation}
The operator $W^s \coloneqq \cos(s\sqrt{-\Delta})$ is the time-$s$ solution operator for the wave equation with zero initial velocity,
in particular $u(s,x) \coloneqq (W^s f \mathds 1_S)(x)$ (weakly) satisfies $(\partial_t^2 - \Delta)u = 0$.
Let $\Bra{\cdot,\cdot}$ denote the $L^2(M,\dv)$ inner product. Using \cref{eq:transmutation},
\begin{equation*}\label{eq:transmutation2}
\Bra{T_t f \mathds 1_S, \mathds 1_S} = \int_0^\infty \hat k(s)\Bra{W_{s\sqrt t}f\mathds 1_S, \mathds 1_{S}}\, \mathrm{d} s\,.
\end{equation*}
Similar reasoning has been used to great effect in 
\cite{Cheeger1982} to derive heat-kernel bounds by making use of the \emph{finite propagation speed} of the wave equation.
As in \cite{Cheeger1982}, finite propagation speed yields for $s \geq 0$ that
$\Bra{W_{s} f \mathds 1_S, \mathds 1_{M \setminus S}} = \Bra{W_{s} f \mathds 1_{S^{s}}, \mathds 1_{S^{-s}}}$, where $S^{s} \coloneqq (M \setminus S)^{-s}$ is defined like \cref{eq:nghb}. Even if $\mathds 1_{M \setminus S} \notin L^2(M,\dv)$, we have just seen that the inner product $\Bra{W_s f \mathds 1_S, \mathds 1_{M \setminus S}}$ is nevertheless well-defined. 
In \cite{Cheeger1982}, it is further observed that $\norm{W_s} \leq 1$. Using the Cauchy-Schwarz inequality and assuming $f = \mathds 1_M$,  \cref{eq:mainformula} yields
\begin{equation}\label{eq:csi}
h(s) \coloneqq \Bra{W_{s} f \mathds 1_{S^{s}}, \mathds 1_{S^{-s}}} \leq
\norm{\mathds 1_{S^s}}_2 \norm{\mathds 1_{S^{-s}}}_2 
\leq s \int_{\partial S} \da + o(s), \quad s \rightarrow 0^+.
\end{equation}
In addition, $|\Bra{W_s f \mathds 1_S, \mathds 1_{S}}| \leq \norm{f\mathds 1_S}_2\norm{\mathds 1_S}_2$ for all $s\geq 0$, in particular as $s \rightarrow \infty$.
We conclude with some calculations (cf.~\cref{lemma:heatkernellemma} below), that
\begin{align}\nonumber
\Bra{T_t \mathds 1_S, \mathds 1_{S}} &= \int_0^\infty\hat k(s) \left( \Bra{W_{s\sqrt t } \mathds 1_S, \mathds 1_M} - \Bra{W_{s \sqrt t} \mathds 1_S, \mathds 1_{M\setminus S}}\right)\,\mathrm{d} s\\
\label{eq:crude}
&= \Bra{\mathds 1_S, \mathds 1_M} - \int_0^\infty \hat k(s) h(s\sqrt t)\, \,\mathrm d s\\
& \geq \int_S \dv - 2\sqrt{\frac{t}{\pi}} \int_{\partial S} \da + o(\sqrt t), \qquad t \rightarrow 0^+.\nonumber
\end{align}

This is weaker than the desired estimate, and restricts to $f = \mathds 1_M$. The problem is that the estimates in \cref{eq:csi} are too crude.
To improve them, we instead approximate the solution $u$ to the wave equation with geometrical optics, using the ``progressing wave'' construction described in \cite[section 6.6]{Taylor2011}, some details of which we recall here.
The basic idea is that $u$ is in general discontinuous, with an outward-- and an inward-- moving discontinuity
given by the zero level-set of functions $\varphi^+$ and $\varphi^-$ respectively.
The functions $\varphi^\pm$ satisfy the eikonal equation
$\partial_t \varphi = \pm|\nabla \varphi^\pm|$ with inital value $\varphi^\pm(0,\cdot) = \varphi(\cdot)$.
Equivalently, using the the (nonlinear) operator $Ew \coloneqq (\partial_t w )^2 - |\nabla w|^2$, the 
functions $\varphi^\pm$ satisfy $E(\varphi^\pm)=0$.
Our analysis is greatly simplified by choosing the initial $\varphi$ to (locally) be the signed distance function to $\partial S$.
The eikonal equation is then $\partial_t \varphi^\pm = \pm|\nabla \varphi| = \pm|-\nu|= \pm1$, i.e.~ $\varphi^\pm(x,t) = \varphi(x) \pm t$. 

The progressing wave construction further makes use of two (locally existing and smooth) solutions $a^\pm_0$ to the first-order
transport equations $\pm\partial_t a^\pm_0(t,\cdot) + \nu \cdot \nabla a^\pm_0(t,x) = \frac12 a^\pm_0\Delta\varphi^\pm$.
Observe that with the Heaviside function
$\theta \colon \mathbb R \rightarrow \mathbb R$, and 
$\Box \coloneqq \partial_t^2 - \Delta$,
the expression $\Box(a_0^\pm \theta(\varphi^\pm))$ is given by
\begin{align}
(\theta''(\varphi^\pm)E\varphi^\pm + \Box\varphi^\pm\theta'(\varphi^\pm))a_0^\pm +
2\left(\partial_t a_0^\pm\partial_t \varphi^\pm - \nabla a_0^\pm \cdot \nabla \varphi^\pm\right)\theta'(\varphi^\pm) + \Box a_0^\pm \theta(\varphi^\pm).\nonumber
\end{align}
The functions $\varphi^\pm$ and $a_0^\pm$ have been chosen so the above simplifies to
\begin{align}
\label{eq:progressing_wave_general}
\Box(a_0^\pm\theta(\varphi^\pm))&=  2\left(\pm \partial_t a_0^\pm + \nabla a_0^\pm \cdot \nu -\frac12\Delta \varphi a^\pm_0 \right)\theta'(\varphi^\pm) + \Box a_0^\pm \theta(\varphi^\pm)\nonumber \\ &= \Box a_0^\pm \theta(\varphi^\pm) \,.
\end{align} 
Thus $\Box(a_0^\pm \theta(\varphi^\pm))$ is as smooth as $\theta$ is. We use
\begin{equation*}\label{eq:geometricoptics}
\tilde u(t,x) \coloneqq a^+_0(t,x)\theta(\varphi^+(t,x)) + a^-_0(t,x)\theta(\varphi^-(t,x))
\end{equation*}
as an approximation to the discontinuity of the solution $u$ to the wave-equation.
To maintain consistency with the initial values of $u$,
the initial values of the approximation $\tilde u$ are chosen to coincide with those of $u$ at $t=0$,
this is achieved by setting $a_0^\pm(0,\cdot) = \frac 12 f$ so that (at least formally) $\partial_t \tilde u(0,\cdot) = 0$ and also $\tilde u(0,\cdot) = \mathds 1_S f$.

The function $\tilde u$ approximates the discontinuous solution $u$ of the wave-equation well enough that the function $(s,x) \mapsto u(s,x) - \tilde u(s,x)$ is 
continuous on $[-T,T] \times M$, see \cite[section 6.6, eq.~6.35]{Taylor2011}. By construction, $\tilde u(0,\cdot) = u(0,\cdot)$. Hence
$|(u(s,x) - \tilde u (s,x)| = o(1)$ as $s \rightarrow 0^+$,
which implies
\begin{equation}\label{eq:part1}
|\Bra{u(s,\cdot), \mathds 1_{S^{-s}}} - \Bra{\tilde u(s,\cdot),\mathds 1_{S^{-s}}}| = o(s)\,\quad s \rightarrow 0^+\,.
\end{equation}
As $\nabla \varphi = -\nu$, for sufficiently small $t$ the sets $\{x \in M : \varphi^+(t,x) = 0\}$ (resp. $\{x : \varphi^-(t,x) = 0\}$) are level sets of $\varphi$ on the outside (resp. inside) of $S$ (see also \cite[section 6.6]{Taylor2011}).
By construction, $\theta(\varphi^-)$ vanishes outside of $S$ for $t > 0$.
Consequently, using \cref{eq:mainformula}, we see that as $s \rightarrow 0^+$,
\begin{align}
\Bra{\tilde u (s,\cdot),\mathds 1_{S^{-s}}} &=  \int_{S^{-s}} a^+_0(s,x) \mathds 1_{ \{\varphi^+(s,\cdot) \geq 0 \}}  + a^-_0(s,x) \mathds 1_{\{\varphi^-(s,x) \geq 0 \}}  \dv(x)\nonumber\\
&=  s\int_{\partial S} a_0^+(0,x) \da(x) + o(s)
=  \frac{s}{2}\,\int_{\partial S} f \da + o(s).\label{eq:part2}
\end{align}
Combining \cref{eq:part1,eq:part2},
\begin{equation*}\label{eq:fromprevious}
h(s) = \Bra{W_s f \mathds 1_S, \mathds 1_{S^{-s}}} = \Bra{u(s,\cdot),\mathds 1_{S^{-s}}} = \frac{s}{2} \int_{\partial S} f \da + o(s),\quad s \rightarrow 0^+.
\end{equation*}
Calculations along the lines of \cref{lemma:heatkernellemma,eq:crude} yield
\begin{equation}\nonumber
\Bra{T_t f \mathds 1_S, \mathds 1_S} =\int_S f \dv -  \sqrt{\frac{t}{\pi}} \int_{\partial S}f \da + o(\sqrt t),\qquad t \rightarrow 0^+,
\end{equation}
as claimed.
\begin{lemma}\label{lemma:heatkernellemma}
Let $j \in \mathbb N$ and $\gamma: \mathbb R_{\geq 0} \rightarrow \mathbb R$. Let $\gamma(s) = s^j + o(s^j)$ for $s \rightarrow 0$ and $\gamma(s) = O(1)$ for $s \rightarrow \infty$. Then for $t \rightarrow 0^+$,
\begin{equation}\label{eq:generalft}
\int_0^\infty \gamma(s \sqrt t)\hat k(s) \,\mathrm ds = t^\frac{j}{2}\begin{cases} 
(-1)^{\frac{j}{2}}\,k^{(j)}(0)  & \textrm{$j$ even } \\
(-1)^{\frac{j-1}{2}}\int_0^\infty \frac{2\, k^{(j)}(s)}{-\pi s}\, \mathrm d s & \textrm{$j$ odd}
\end{cases} \quad + o\left(t^\frac{j}{2}\right)\,.
\end{equation}
With $k(s) = \exp(-s^2)$ and $h(s) = c_0 + c_1s + c_2 s^2 + o(s^2)$, this implies
\begin{equation}\label{eq:specialcase}
\int_0^\infty h(s\sqrt t) \hat k(s) \,\mathrm d s = c_0 + \frac{2c_1}{\sqrt \pi} \sqrt t + 2 c_2 t + o(t)\,.
\end{equation}
\end{lemma}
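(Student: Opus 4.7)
The plan is to decompose $\gamma(s\sqrt t) = (s\sqrt t)^j + [\gamma(s\sqrt t) - (s\sqrt t)^j]$ and integrate each piece against $\hat k$. The first term contributes $t^{j/2} M_j$, where $M_j \coloneqq \int_0^\infty s^j \hat k(s)\,\mathrm{d}s$ is the $j$th moment of $\hat k$ on $[0,\infty)$; the second is a remainder that I will show is $o(t^{j/2})$.

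To identify $M_j$ with the expressions on the right-hand side of \cref{eq:generalft}, I would start from $k(u) = \int_0^\infty \hat k(s)\cos(us)\,\mathrm{d}s$ and differentiate $j$ times under the integral sign, which is justified by the Gaussian decay of $\hat k$. For even $j = 2m$, $\frac{\mathrm{d}^j}{\mathrm{d}u^j}\cos(us) = (-1)^m s^j \cos(us)$, so evaluating at $u=0$ gives $M_j = (-1)^{j/2}k^{(j)}(0)$ directly. For odd $j = 2m+1$, the analogous formula $\frac{\mathrm{d}^j}{\mathrm{d}u^j}\cos(us) = (-1)^{m+1}s^j\sin(us)$ vanishes at $u=0$, so instead I would divide $k^{(j)}(u)$ by $u$ and integrate in $u$ on $(0,\infty)$. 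The joint integrand is not absolutely integrable, but the truncated inner integral $\int_0^R \sin(us)/u\,\mathrm{d}u = \mathrm{Si}(Rs)$ is bounded uniformly in $R,s$ and converges pointwise to $\pi/2$ as $R\to\infty$; dominated convergence and the Dirichlet identity then yield $\int_0^\infty k^{(j)}(u)/u\,\mathrm{d}u = (-1)^{m+1}(\pi/2)M_j$, matching the odd case of the lemma.

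For the remainder, I fix $\epsilon > 0$ and choose $\delta > 0$ so that $|\gamma(u) - u^j| \le \epsilon u^j$ for $u \in [0,\delta]$, and let $C$ bound $|\gamma|$ on $[0,\infty)$. Splitting the integral at $s_0 = \delta/\sqrt t$, the contribution from $s \le s_0$ is bounded by $\epsilon t^{j/2} M_j$, while the tail $s > s_0$ is controlled by $\int_{s_0}^\infty (C + (s\sqrt t)^j)\hat k(s)\,\mathrm{d}s$, which decays faster than any polynomial in $t$ (in fact exponentially in $1/t$) thanks to the Gaussian factor in $\hat k$. Sending $\epsilon \to 0$ shows the remainder is $o(t^{j/2})$, proving \cref{eq:generalft}.

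Finally, \cref{eq:specialcase} follows by linearity. Write $h(s) = c_0\chi(s) + c_1\chi(s)s + c_2\chi(s)s^2 + R(s)$, where $\chi$ is a smooth cutoff equal to $1$ near $0$ and supported in a bounded interval; then $R$ is bounded on $[0,\infty)$ with $R(s) = o(s^2)$ as $s\to 0$. Applying \cref{eq:generalft} to each of the first three pieces with $j = 0,1,2$, and repeating the error argument for $R$ (which yields $o(t)$), one obtains the claimed expansion after computing $k(0) = 1$, $\int_0^\infty 2k'(s)/(-\pi s)\,\mathrm{d}s = 2/\sqrt\pi$, and $-k''(0) = 2$. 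The one substantive obstacle in all of the above is the Fubini step in the odd case: since $\hat k(s)s^j\sin(us)/u$ is not absolutely integrable in the joint variable, the exchange of integrals must be executed by truncating in $u$ and passing to the limit using the uniform boundedness of the sine integral.
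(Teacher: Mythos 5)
Your proof is correct and follows essentially the same route as the paper: the even case is the Fourier derivative formula, and your truncated Dirichlet/sine-integral computation in the odd case is exactly the paper's ``multiply by the sign function in frequency space and use that its inverse Fourier transform is $\mathrm{p.v.}(2i/x)$'' argument made explicit. You additionally supply the $o(t^{j/2})$ remainder estimate and the justification for interchanging the integrals, which the paper's one-paragraph proof leaves implicit; both are carried out correctly.
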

\begin{proof}
For even $j$, we obtain \cref{eq:generalft} by the Fourier-transform formula for$j$-th derivatives. If $j$ is odd, 
we also need to multiply by the sign-function in frequency space, and then use that the inverse Fourier-transform (unnormalized) of the sign function is given by the principal value $\mathrm{p.v.}\left(\frac{2i}{x}\right)$ \cite[section 4]{Taylor2011}, see also \cite[Chapter 7]{Rudin1991}. 
\Cref{eq:generalft} holds more generally,
e.g. if $k$ is an even Schwarz function. \Cref{eq:specialcase} may also be verified directly without \cref{eq:generalft}. 
\end{proof}
\section{Proof for $\beta_2,\beta_3,\cdots$}\label{sec:higher_coeffs}
We now turn to calculating $\beta_j$ for $j \geq 2$.
We use the $N$-th order progressing wave construction with sufficiently large $N \gg j$.
For the sake of simplicity, we write $O(t^\infty)$ for quantities that can be made $O(t^k)$ for any $k\in\mathbb N$ by choosing sufficiently large $N$.
As in the previous section, the construction is from \cite[section 6.6]{Taylor2011}.
With $\theta_0 \coloneqq \theta$, and $\theta_i(t) \coloneqq \int_{-\infty}^t \theta_{i-1}(s)\mathrm ds$ we write
\begin{align*}
\tilde u^\pm(t,x) \coloneqq \sum_{i=0}^N a^\pm_i(t,x) \theta_i(\varphi^\pm(t,x))\,.
\end{align*}
Here the functions $a_0^\pm$ are defined as before; and for $i \geq 1$ the $i$-th order transport equations $\pm \partial_t a^\pm_i = -\nu \cdot \nabla a^\pm_i + \frac 12 a_i^\pm \Delta \varphi^\pm - \frac 12\Box a_{i-1}^\pm$  define $a^\pm_i$ together
with initial data $a^\pm_i(0,\cdot) = -\frac12(\partial_t a^+_{i-1}(0,\cdot) + \partial_t a^-_{i-1}(0,\cdot))$. As in
\cref{eq:progressing_wave_general}, one may verify that $\Box\tilde u^\pm  = \Box a_{i} \theta_N(\varphi^\pm)$.
Writing $\tilde u = \tilde u^+ + \tilde u^-$ and 
\begin{equation}\nonumber
u(t,x) = \tilde u^+(t,x) + \tilde u^-(t,x) + R_N(t,x)\,,
\end{equation}
the remainder satisfies $R_N \in C^{(N,1)}([-T,T] \times M)$ and $R_N(t,\cdot)$ vanishes at $t=0$, see \cite[section 6.6, eq.~6.35]{Taylor2011}. Moreoever, $R_N$ is supported on $\{(x,t) : \dist(x,S) \leq |t|\}$, all of this implies that, as $t \rightarrow 0^+$,
\begin{align}\label{eq:approximationworks}
h(t) = \int_{M \setminus S} u(t,x)\dv(x) = \int_{M \setminus S} \tilde u^+(t,x)\dv(x) + O(t^\infty)\,
\end{align}
and moreover $h \in C^\infty([0,T])$.
The structure of $R_N$ implies that $\Box \tilde u^+(t,x) = O(t^\infty)$ on $M \setminus S$,
provided that this expression is interpreted in a sufficiently weak sense.
Formally, therefore
\begin{align}
\partial_t^2\int_{M\setminus S} \tilde u^+(\cdot,t)\dv &= \int_{M \setminus S} \Delta \tilde u^+(\cdot,t) \dv + O(t^\infty) \nonumber \\
&= - \int_{\partial S} \nabla \tilde u^+(\cdot,t) \cdot \nu \da + O(t^\infty)\,,\label{eq:afterdivergence}
\end{align}
where the last step is the divergence theorem. One may verify \cref{eq:afterdivergence} rigorously by either doing the above steps in the sense
of distributions, or by a (somewhat tedious) manual computation.  
Combining this with \cref{eq:approximationworks}, 
\begin{align}\label{eq:hdd}
h''(t) = -\int_{\partial S} \nabla \tilde u^+(\cdot,t) \cdot \nu \da + O(t^\infty)\,.
\end{align}
The quantity $h^{(j)}(0)$ may thus be seen to depend $\tilde u^+(0,\cdot)$ at $\partial S$, which in turn depends on $a_i^\pm$ at $t=0$. 
Defining $\mathbf S_i \coloneqq a_i^+ + a_i^-$ and $\mathbf D_i \coloneqq a_i^+ - a_i^-$ for $i=0,1,\dots$,
let $L$ be the (spatial) differential operator defined for $w \in C^\infty(M)$ by $Lw \coloneqq \frac12 \Delta \varphi w - \nu \cdot \nabla w$.
For $i \in \mathbb N_0$, the transport equations imply
\begin{alignat}{5}\label{eq:recurrence_relations_a}
\partial_t \mathbf S_0 &= L\mathbf D_0\,,  &\quad\quad \partial_t \mathbf D_0 &= L \mathbf S_0\,,  \quad\quad\\
\partial_t \mathbf S_{i+1} &= L\mathbf D_{i+1} - \frac12\Box \mathbf D_{i}\,, &\quad\quad \partial_t \mathbf D_{i+1} &= L\mathbf S_{i+1} - \frac12\Box S_{i} \quad \mathrm{for} \quad i \geq 0\,,\label{eq:recurrence_relations_b}
\end{alignat}
with initial values satisfying
\begin{alignat}{4}\label{eq:recurrence_relations_iv_a}
a_{0}^+(0,\cdot) &=\  \frac12 \mathbf S_0(0,\cdot) = \frac12 f(\cdot)\,,\quad\quad &\mathbf D_0(0,\cdot) &= 0\,,\\
a_{i+1}^+(0,\cdot) &= \frac12 \mathbf D_{i+1}(0,\cdot) = -\frac12 \partial_t \mathbf S_{i}(0,\cdot)\,,\quad\quad &\mathbf S_{i+1}(0,\cdot) &= 0\label{eq:recurrence_relations_iv_b}\,.
\end{alignat}
\begin{lemma}\label{lemma:ailemma}
For $i,n \in \mathbb N_0$ it holds that $\partial_t^{2n}\mathbf D_i(0,\cdot) = 0$  (note that as a consequence, also $a_{i+1}(0,\cdot)$, $L\mathbf D_i(0,\cdot)$, and $\Box^n \mathbf D_i(0,\cdot)$ are zero).
\end{lemma}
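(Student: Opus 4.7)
The plan is to prove, by induction on $i$, a strengthened parity claim: for all $i, n \in \mathbb{N}_0$, both $\partial_t^{2n}\mathbf{D}_i(0,\cdot) = 0$ and $\partial_t^{2n+1}\mathbf{S}_i(0,\cdot) = 0$. Intuitively, this records the time-evenness of solutions to the wave equation with vanishing initial velocity, the evenness being split between the sum and difference components of the progressing-wave expansion.

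For the base case $i=0$, iterating \cref{eq:recurrence_relations_a} yields $\partial_t^{2n}\mathbf{D}_0 = L^{2n}\mathbf{D}_0$ and $\partial_t^{2n+1}\mathbf{S}_0 = L^{2n+1}\mathbf{D}_0$, both of which vanish at $t=0$ because $\mathbf{D}_0(0,\cdot) = 0$ by \cref{eq:recurrence_relations_iv_a} and $L$ acts only in space. For the inductive step, fix $i$, assume the claim at that index, and substitute $\Box = \partial_t^2 - \Delta$ into \cref{eq:recurrence_relations_b}. Differentiating in $t$ and evaluating at $t=0$ gives, for $n \geq 1$,
\begin{align*}
\partial_t^{2n}\mathbf{D}_{i+1}(0,\cdot) &= L\partial_t^{2n-1}\mathbf{S}_{i+1}(0,\cdot) - \frac{1}{2}\partial_t^{2n+1}\mathbf{S}_i(0,\cdot) + \frac{1}{2}\Delta\partial_t^{2n-1}\mathbf{S}_i(0,\cdot), \\
\partial_t^{2n+1}\mathbf{S}_{i+1}(0,\cdot) &= L\partial_t^{2n}\mathbf{D}_{i+1}(0,\cdot) - \frac{1}{2}\partial_t^{2n+2}\mathbf{D}_i(0,\cdot) + \frac{1}{2}\Delta\partial_t^{2n}\mathbf{D}_i(0,\cdot).
\end{align*}
The inductive hypothesis on $i$ annihilates every right-hand term containing $\mathbf{S}_i$ or $\mathbf{D}_i$ (noting that $\Delta$ commutes with $\partial_t^k$ and sends the zero function to zero), leaving the reduced recurrences $\partial_t^{2n}\mathbf{D}_{i+1}(0,\cdot) = L\partial_t^{2n-1}\mathbf{S}_{i+1}(0,\cdot)$ and $\partial_t^{2n+1}\mathbf{S}_{i+1}(0,\cdot) = L\partial_t^{2n}\mathbf{D}_{i+1}(0,\cdot)$. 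Together with the seed $\mathbf{D}_{i+1}(0,\cdot) = -\partial_t\mathbf{S}_i(0,\cdot) = 0$ (by \cref{eq:recurrence_relations_iv_b} and the induction hypothesis for $n=0$), a nested induction on $n$ propagates the vanishing by alternately applying $L$ to the zero function.

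The stated consequences follow at once. Since $\mathbf{S}_{i+1}(0,\cdot) = 0$, we have $a_{i+1}^\pm(0,\cdot) = \pm\frac{1}{2}\mathbf{D}_{i+1}(0,\cdot) = 0$; the spatial operator $L$ applied to the zero function $\mathbf{D}_i(0,\cdot)$ gives $L\mathbf{D}_i(0,\cdot) = 0$; and because $\partial_t^2$ commutes with $\Delta$, the binomial expansion $\Box^n = \sum_{k=0}^n \binom{n}{k}(-1)^{n-k}\partial_t^{2k}\Delta^{n-k}$ reduces $\Box^n\mathbf{D}_i(0,\cdot)$ to a sum of $\Delta^{n-k}\partial_t^{2k}\mathbf{D}_i(0,\cdot)$, all zero by the main claim. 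The main obstacle is purely bookkeeping: the key structural point making everything work is that $L$, $\Delta$, and $\Box$ all preserve $t$-parity, so once the induction hypothesis removes the wrong-parity source terms, the surviving recurrences couple only matched parities.
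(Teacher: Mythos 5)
Your proof is correct and follows essentially the same strategy as the paper's: induction over $i$ using the recurrence relations \cref{eq:recurrence_relations_a,eq:recurrence_relations_b} and the initial values, with a nested induction on $n$. The one refinement is that you strengthen the induction hypothesis to also record $\partial_t^{2n+1}\mathbf S_i(0,\cdot)=0$, which turns the paper's two-step induction (assuming the claim for $i$ and $i+1$ to deduce it for $i+2$) into a cleaner one-step induction — a genuine tidying of the bookkeeping, but not a different method.
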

\begin{proof}
We will proceed by induction over $i$ and use the identities \crefrange{eq:recurrence_relations_a}{eq:recurrence_relations_iv_b}. For $i=0$, $\mathbf D_0(0,\cdot) = 0$ is trivially satisfied. Moroever, $\partial_t^{2n} \mathbf D_0 = R^n\mathbf D_0$, which is zero at $t=0$. For $i=1$, observe that $a_1^+(0,\cdot) = -\frac12 \partial_t \mathbf S_0(0,\cdot) = -\frac12 L\mathbf D_0(0,\cdot) = 0$, and thus $\mathbf D_1(0,\cdot) = 0$. Likewise, $\partial_t^2 \mathbf D_1 = \partial_t(L\mathbf S_1 - \frac12\Box \mathbf S_0) = L(L\mathbf D_1 - \frac 12\Box \mathbf D_0) - \frac12\Box L\mathbf D_0$. As the operator $L$ commutes with $\partial_t^2$, this expression vanishes at $t=0$. Induction over $n$ proves the remainder of of the statement for $i=1$.
For the general case, we assume the induction hypothesis for $i$ and $i+1$ and start by noting that $\mathbf D_{i+2}(0,\cdot) = 2a_{i+2}^+(0,\cdot) = -\partial_t \mathbf S_{i+1}(0,\cdot) = -\left(L \mathbf D_{i+1}(0,\cdot) - \frac12\Box \mathbf D_{i}(0,\cdot) \right) = 0$. Moreover, $\partial_t^2 \mathbf D_{i+2} = \partial_t ( L \mathbf S_{i+2} - \frac12 \Box \mathbf S_{i+1}) = L(L\mathbf D_{i+2} - \frac12 \Box \mathbf D_{i+1}) - \frac12 \Box \left(L \mathbf D_{i+1} - \frac12 \Box \mathbf D_i\right)$, which again vanishes at $t=0$; the case $n > 1$ may again be proven by induction over $n$.
\end{proof}
\begin{corollary}\label{lemma:coefficientslemma} For even $j \in \mathbb N_{\geq 2}$, the $j$-th derivative of $h$ sastifies
\begin{equation}\nonumber
h^{(j)}(0) =  -\frac 12 \int_S \Delta^{j/2} f\dv\,.
\end{equation}
\end{corollary}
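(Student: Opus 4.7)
The plan is to iterate \cref{eq:hdd}, namely $h''(t) = -\int_{\partial S}\nabla \tilde u^+(\cdot,t)\cdot \nu\,\da + O(t^\infty)$, and then use \Cref{lemma:ailemma} to identify the relevant boundary trace as $t \to 0^+$. For $t>0$ sufficiently small, on a neighborhood of $\partial S$ one has $\varphi(x) + t \geq t > 0$, so there $\tilde u^+(t,\cdot)$ is given by the smooth polynomial $\sum_{i=0}^N a_i^+(t,x)(\varphi(x)+t)^i/i!$ with no Heaviside contribution. Since $\Box\tilde u^+ = \Box a_N^+\cdot\theta_N(\varphi + t)$ is $O(t^N)$ at $\partial S$, one has $\partial_t^2\tilde u^+ = \Delta \tilde u^+ + O(t^N)$ on a neighborhood of $\partial S$. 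Iterating this identity $j/2 - 1$ times and taking $N$ sufficiently large relative to $j$ yields $\partial_t^{j-2}\tilde u^+ = \Delta^{j/2-1}\tilde u^+ + O(t^\infty)$ near $\partial S$. Differentiating \cref{eq:hdd} $j-2$ times in $t$ and commuting $\partial_t^{j-2}$ past $\nabla$ and the fixed boundary integral gives
\begin{equation*}
h^{(j)}(t) = -\int_{\partial S}\nabla\Delta^{j/2-1}\tilde u^+(\cdot,t)\cdot \nu\,\da + O(t^\infty).
\end{equation*}

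Next, I would pass to $t \to 0^+$. Combining the consequences noted in \Cref{lemma:ailemma} with $\mathbf S_i(0,\cdot) = 0$ for $i \geq 1$ from \cref{eq:recurrence_relations_iv_b} (together with $a_i^+ = (\mathbf S_i + \mathbf D_i)/2$) gives $a_i^+(0,\cdot) = 0$ for every $i \geq 1$. Hence on a neighborhood of $\partial S$ the smooth extension $\sum_{i=0}^N a_i^+(0,x)\varphi(x)^i/i!$ of $\tilde u^+(0,\cdot)$ collapses to $a_0^+(0,\cdot) = \tfrac12 f$, which implies $\Delta^{j/2-1}\tilde u^+(0^+,\cdot) = \tfrac12\Delta^{j/2-1}f$ near $\partial S$. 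A final application of the divergence theorem on $S$, $\int_{\partial S}\mathcal L_\nu g\,\da = \int_S \Delta g\,\dv$ with $g = \tfrac12\Delta^{j/2-1}f$, yields
\begin{equation*}
h^{(j)}(0) = -\int_{\partial S}\tfrac12\mathcal L_\nu\Delta^{j/2-1}f\,\da = -\tfrac12\int_S \Delta^{j/2}f\,\dv,
\end{equation*}
as claimed.

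The main obstacle I expect is making the iteration in the first paragraph fully rigorous: one must commute many $t$-derivatives with the $\partial S$ boundary integral and the spatial gradient, and one must control how the $O(t^N)$ error from $\Box\tilde u^+$ propagates after repeated applications of $\partial_t^2 = \Delta + O(t^N)$. This is exactly where the freedom to choose $N$ arbitrarily large is essential. The fact that $\tilde u^+$ is only finitely smooth across the wavefront $\{\varphi+t=0\}$ causes no trouble, since for small $t>0$ that wavefront lies at positive distance from $\partial S$, and on the shrinking neighborhood of $\partial S$ where all manipulations take place the relevant expression for $\tilde u^+$ is classically smooth.
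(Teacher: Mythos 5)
Your proposal is correct and follows essentially the same route as the paper: iterate $\partial_t^2\tilde u^+ = \Delta\tilde u^+ + O(t^\infty)$ near $\partial S$ inside \cref{eq:hdd}, use \Cref{lemma:ailemma} to reduce $\tilde u^+(0,\cdot)$ to $\tfrac12 f$ on the relevant side, and finish with the divergence theorem. The only slip is the phrase ``for $t>0$ sufficiently small, on a neighborhood of $\partial S$ one has $\varphi(x)+t\geq t>0$'' (the correct region is $\{\varphi>-t\}$, a one-sided collar of width $t$), but your closing paragraph already identifies and resolves this correctly.
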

\begin{proof}

\Cref{lemma:ailemma} shows that for $i \geq 1$, $a^+_i(0,x) = 0$. Together with \cref{eq:hdd}, thus $h''(0) = -\int_{\partial S} \nabla a^+_0(0,\cdot) \cdot \nu \da = -\frac12\int_{\partial S} \nabla f \cdot \nu\da$. This is the case $j=2$. 
More generally, 
for $j = 2k$ with $k \in \mathbb N_{\geq 2}$, we use that (for $x \in \partial S$), $\tilde u^+$ satisfies
$\partial_t^2 \tilde u^+(t,x) = \Delta \tilde u^+(t,x) + O(t^\infty)$. \Cref{eq:hdd} ensures that as $t \rightarrow 0^+$,
\begin{align*}
 h^{(2k)}(t) = \int_{\partial S} \nabla (\Delta^{k-1} \tilde u^+(t,\cdot)) \cdot \nu \da + O(t^\infty)\,.
\end{align*}
As for the case $k=1$, it follows that $h^{(2k)}(0) = -\int_{\partial S} \nabla(\Delta^{k-1} a^+_0) \cdot \nu\da$, the divergence theorem yields the claim.
\end{proof}
The odd coefficients are trickier, we only compute the case $j=3$. We start with the observation that for $x\in\partial S$, $\varphi^+(t,x) = t$ and therefore
\begin{align*}\label{eq:vanishinghigher}
\tilde u^+(t,x) &= \sum_{i=0}^N \frac{1}{i!} t^i a^+_i(t,x)\quad \mathrm{ for }\ \ t \geq 0,\ \  x \in \partial S\,.
\end{align*}
Recall that that the Lie-derivative acts on functions $w \in C^\infty(M)$ by $\mathcal L_\nu w = \nabla w \cdot \nu$. Thus
$\mathcal L_\nu \theta_{i+1}(\varphi^+(t,x)) = -\theta_{i}(\varphi^+(t,x))$, so for $x \in \partial S$,
\begin{align*}
\mathcal L_\nu \tilde u^+(t,x) &= \sum_{i=0}^{N-1} \frac{t^i}{i!}(\mathcal L_\nu a_i^+(t,x) -  a_{i+1}(t,x)) + O(t^\infty)\,.
\end{align*}
Therefore $\partial_t \mathcal L_\nu \tilde u^+(0,x) = \partial_t(\mathcal L_\nu a_0^+(0,x) - a_1^+(t,x)) + (\mathcal L_\nu a_1^+(0,x) - a_2^+(0,x))$,
but the second term is zero as $a_1^+$ and $a_2^+$ vanish at $t=0$ by \cref{lemma:ailemma}. Substituting the transport equations and removing further zero terms leaves
$\partial_t \mathcal L_\nu \tilde u^+(0,x) = \mathcal L_\nu La_0^+(0,x) + \frac12 \Box a_0(0,x) = \frac12\left(\mathcal L_\nu L f(x) - \frac12 \Delta f(x) + \frac12 L^2 f(x) \right)$. Thus (recall that $L = - \mathcal L_\nu + \frac12\Delta \varphi$) directly from \cref{eq:hdd},
\begin{align*}
h^{(3)}(0) &= -\frac12\int_{\partial S}\mathcal L_\nu L f(x) - \frac12 \Delta f(x) + \frac12 L^2 f(x)\da(x)\,.
\end{align*}
The formula
\begin{equation}\label{eq:basicidea}
\Omega_{S,f}(t) = \int_0^\infty \hat k(s)\left(\int_S f \dv -  h( s \sqrt t ) \right) \,\mathrm{d} s
\end{equation}
established in the previous section, together with \cref{lemma:heatkernellemma}, yields the asymptotic behaviour of $\Omega_{S,f}(t)$ by
taking the Taylor-expansion of $h$ using \cref{lemma:coefficientslemma}. This gives the remainder of the claims of \cref{thm:thm2}.
\section{Discussion}
The above-said is not specific to the heat equation. Taking $k(x) = \exp(-x^{2m})$, $m \in \mathbb N$, we may, for example, study the one-parameter operator family $\exp(-t^m \Delta^m)$. The wave equation estimates needed are the same.  For $m \geq 2$, a brief calculation yields the explicit $t \rightarrow 0^+$ asymptotics 
\begin{equation*}
\Bra{\exp(t^m\Delta^m)f \mathds 1_S, \mathds 1_S} = \int_S f\dv -  \left(\pi^{-1} \Gamma\left(\frac{2m-1}{2m}\right) \int_{\partial S} f\da\right)\sqrt t  + o(t).
\end{equation*}
We conclude with the observation that the generalization of this paper to \emph{weighted} Riemannian manifolds (cf.~\cite{Grigoryan2009}) is straightforward.
\section{Acknowledgements}
The author was supported by the Priority Programme
SPP 1881 Turbulent Superstructures of the Deutsche Forschungsgemeinschaft.
The author thanks the reviewer for simplifying a significant part of the argument,
and thanks Oliver Junge and Daniel Karrasch for helping to improve the manuscript.


\end{document}